\newtheorem{thm}{Theorem}[section]
\newtheorem{lem}[thm]{Lemma}
\newtheorem{prop}[thm]{Proposition}
\theoremstyle{definition}
\newtheorem{remark}[thm]{Remark}
\theoremstyle{plain}
\numberwithin{equation}{section}
\newcommand{\C}{\mathbb{C}}
\newcommand{\N}{\mathbb{N}}
\newcommand{\Q}{\mathbb{Q}}
\newcommand{\R}{\mathbb{R}}
\newcommand{\Z}{\mathbb{Z}}
\newcommand{\SL}{\mathrm{SL}}
\newcommand{\mrm}[1]{\mathrm{#1}}
\title[On the gaps between non-zero Fourier coefficients of CM eigenforms]{On the gaps between non-zero Fourier coefficients of eigenforms with CM}
\author[S. Kaushik]{Surjeet Kaushik}
\email{ma15resch01001@iith.ac.in}
\address{
Department of Mathematics \\
Indian Institute of Technology Hyderabad\\
Kandi, Sangareddy - 502285\\
INDIA. 
}
\author[N. Kumar]{Narasimha Kumar}
\email{narasimha.kumar@iith.ac.in}
\address{
Department of Mathematics \\
Indian Institute of Technology Hyderabad\\
Kandi, Sangareddy - 502285\\
INDIA. 
}
\date{}
\begin{document}
\begin{abstract}
Suppose $E$ is an elliptic curve over $\Q$ of conductor $N$ with complex multiplication (CM) by $\Q(i)$, 
and $f_E$ is the corresponding cuspidal Hecke eigenform in $S^{\mrm{new}}_2(\Gamma_0(N))$. 
Then $n$-th Fourier coefficient of $f_E$ is non-zero in the short interval $(X, X + cX^{\frac{1}{4}})$ 
for all $X \gg 0$ and for some $c > 0$. As a consequence, we produce infinitely many cuspidal CM eigenforms 
$f$ level $N>1$ and weight $k > 2$ for which $i_f(n) \ll n^{\frac{1}{4}}$  holds, for all $n \gg 0$.

\end{abstract}
\subjclass[2010]{Primary 11F30; Secondary 11F11, 11F33, 11G05}
\keywords{Elliptic curves, Fourier coefficients of cuspidal eigenforms, CM eigenforms, modular forms of higher weight}
  \maketitle

\maketitle{}
\section{introduction}
Estimating the size of possible gaps between the non-zero Fourier coefficients of modular cuspforms
is one of the fundamental and interesting object of study in number theory.  
In this short note, we are interested in a question of Serre in bounding the maximum length of consecutive zeros of 
elliptic curves with CM by $\Q(i)$ and cusp forms of higher weight and level.

A famous conjecture of Lehmer predicts that $\tau(n) \neq 0$ for any $n \geq 1$, where
$$ \Delta(z) =  \sum_{n=1}^{\infty} \tau(n) q^n$$ 
is the unique normalized cuspidal eigenform of weight $12$ and level $1$. 
In a relation to Lehmer's conjecture, in~\cite{Ser81}, Serre initiated the general study of 
estimating the size of possible gaps between the non-zero Fourier coefficients of modular cuspforms by the function
$$i_f(n) := \mrm{max}\ \{i: a_f(n+j)=0 \ \mrm{for\ all}\  0 \leq j \leq i\}.$$
In fact, he proved that if $f(z)$ is a cusp form of weight $k \geq 2$ which is not a linear combination of forms 
with CM, then 
\begin{equation*}
i_f(n) \ll n. 
\end{equation*}
In that article, he also poses a question, if the bound can be improved to $n^{\delta}$ with $0<\delta<1$. 
In~\cite{BO01}, Balog and Ono proved that, for a cuspidal eigenform on $\Gamma_0(N)$, not a linear combination of forms with CM, that for any $\epsilon > 0$,
\begin{equation*}
i_f(n) \ll_{f,\epsilon} n^{17/41+\epsilon} \quad \forall\  n\in \N. 
\end{equation*}
Currently, the best bound for $i_f(n)$ is available  due to Kowalski, Robert, and Wu 
and they proved that for any holomorphic non-CM cuspidal eigenform $f$ on general congruence groups, 
\begin{equation*}
i_f(n) \ll_{f} n^{7/17+\epsilon},
\end{equation*}
holds for all $n \in \N$ (cf.~\cite{KRW07}). However, in the case of elliptic curves,
the best bound is available due to Alkan. In~\cite{Alk03}, he showed that, if $f$ is a weight $2$ cusp form corresponding to 
an elliptic curve without CM,
then
\begin{equation}
i_f(n) \ll_{f,\epsilon} n^{\frac{69}{169}+\epsilon}.
\end{equation}

Through various approaches, many mathematicians have contributed in answering this question. The approaches are either using Rankin-Selberg estimates,
or Chebotarev density theorem, or distribution of $B$-free numbers, etc., 
(cf.\cite{BO01},
\newline \cite{Alk03},\cite{Alk05},\cite{AZ05}, \cite{AZ05JNT}, \cite{Mat12}, \cite{KRW07}, \cite{AZ08}). 


In~\cite{AZ05IJNT}, for the first time, the authors have exploited  the idea of using congruences to study $i_f(n)$.
There it was shown that 
$i_{\Delta}(n) \ll n^{\frac{1}{4}}$, where $\Delta$ is the unique normalized cuspidal eigenform of weight $12$. 
The proof of this theorem relies on the existance of sums of squares in short intervals of the form $(x, x+x^{\frac{1}{4}})$.
In~\cite{DG14}, the authors extended the above idea 
to show that for any non-zero modular form $f \in S_{k}(\Gamma_0(1))$ 
with $k \geq 12$, one has 
\begin{equation}
\label{basicidentity}
i_f(n) \ll n^{1/4} 
\end{equation}
$\forall \ n \gg 0,$ where the implied constant depends only on $k$. 

If the level $N>1$, then there are no similar general results are available with $i_f(n) \ll n^{\frac{1}{4}}$. 
However, in~\cite{DG15}, the authors were able to produce infinitely many non-isogenous elliptic curves for which~\eqref{basicidentity} holds.
In~\cite{Kum16}, the second author had constructed examples of non-CM (and CM) cuspidal eigenform of level $N>1$ and weight $k>2$ 
for which the~\eqref{basicidentity} holds.

In the approaches through Chebotarev density theorem, or distribution of $B$-free numbers, etc..,
one need to assume that the cuspidal eigenforms are without CM.
This is because, all those approaches depends on a result of Serre, which is valid only for eigenforms without CM.  
However, the analogous question make sense for cuspidal eigenforms with CM as well, and this can be handled
through the approach of ~\cite{AZ05IJNT},~\cite{DG14},~\cite{DG15},~\cite{Kum16}.


In this short note, we show that 
any elliptic curve over $\Q$ with CM by $\Q(i)$
satisfy
\begin{equation*}
i_f(n) \ll n^{\frac{1}{4}} 
\end{equation*}
for $n \gg 0$. Then, we shall produce a one-parameter family of elliptic curves 
over $\Q$ with CM by $\Q(i)$ and infinitely many cuspidal CM eigenforms level $N>1$ 
and weight $k > 2$ for which the~\eqref{basicidentity} holds.


\section{Statements and proofs}
Suppose $E$ is an elliptic curve over $\Q$ of conductor $N$ with CM by $\Q(i)$.
Let $f_E \in S^{\mrm{new}}_2(\Gamma_0(N))$ be the corresponding cuspidal eigenform 
of level $N$, by the modularity theorem. In this section, we shall show that 
the elliptic curve $E$ satisfies
\begin{equation}
i_{f_E}(n) \ll n^{\frac{1}{4}} 
\end{equation}
for $n \gg 0$.

Let us briefly recall the notion of CM. Let $K$ be an imaginary quadratic field and $\mathcal{O}_K$ be the integral closure.
Let $\mathfrak{m}$ be an integral ideal of $K$ and let $I(\mathfrak{m})$ be the group of fractional ideals of $K$ co-prime to $\mathfrak{m}$. 
By definition, a Hecke character $\Psi$ of $K$ is a homomorphism $$\Psi: I(\mathfrak{m}) \to \C^*,$$
such that $\Psi(\alpha)=\alpha^r$ for all $\alpha\in K^*$ with $\alpha\equiv 1 \mod \ \mathfrak{m}$.
For such a Hecke character $\Psi$, one can associate the function $f=f_{\psi}$ defined by $$f_\psi(z)=\sum_{\mathfrak{a}\subseteq \mathcal{O}_K, (\mathfrak{a},\mathfrak{m})=1} 
\Psi(\mathfrak{a})e^{2\pi i(N\mathfrak{a})z},$$ where $N\mathfrak{a}$ denotes 
the norm of $\mathfrak{a}$. We can also write the function as $$f_\psi(z)=\sum_{n\geq1} a_{\psi}(n)e^{2\pi inz},$$ 
where \begin{equation}
\label{inertzero}
    a_{\psi}(n)=\sum_{(\mathfrak{a},\mathfrak{m})=1, N\mathfrak{a}=n} \Psi(\mathfrak{a}) 
\end{equation}
By a theorem of Hecke, $f_\psi \in S_{r+1}(\Gamma_0(|d_K|N\mathfrak{m}), \epsilon)$ is an eigenform,
where $d_K$ is discriminant of field $K$, $\epsilon$ is a character modulo $|d_K|N\mathfrak{m}$ (cf. refer to~\cite{Rib77} for more details).
Let us recall some useful results from ~\cite[Theorem 1]{DG14} and~\cite[Lemma 2.2]{KRW07}, 
which we shall use.
\begin{thm}[Das-Ganguly]
\label{sumsofsquares}
Given any integer $N \in \N$, there exists $X_0 \in \R^{+}$ and $c>0$ (depending only on $N$) such that 
there exists an integer $n$ which is a sum of two squares and co-prime to $N$ in intervals
of type $(X, X+cX^{\frac{1}{4}})$ for all $X \gg X_0$.
\end{thm}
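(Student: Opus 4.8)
The plan is to reduce the statement to an elementary fact about gaps between squares in a fixed arithmetic progression, by searching only for integers $n$ of a rigid shape that makes both required properties hold automatically. Concretely, I would restrict attention to
\[
n = u^2 + (Nt)^2, \qquad u \equiv 1 \pmod N, \quad t \in \N .
\]
Every such $n$ is a sum of two squares by construction, and $n \equiv u^2 \equiv 1 \pmod N$, so $\gcd(n,N)=1$ automatically. Thus the whole problem collapses to this: for a suitable constant $c=c(N)$ and all $X\gg 0$, produce one such $n$ inside the short interval $(X,\, X+cX^{1/4})$.

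To carry this out, I would take $u$ to be the largest integer with $u\equiv 1\pmod N$ and $u^2\le X$. Then $u = \sqrt X + O(N)$, and since $(u+N)^2 > X$ by maximality, the remainder $R := X - u^2$ satisfies $0\le R\le 2Nu + N^2 \le 2N\sqrt X + N^2 = O_N(\sqrt X)$. What remains is to find $t\in\N$ with $R < (Nt)^2 < R + cX^{1/4}$, i.e.\ with $Nt$ lying in the open interval $\bigl(\sqrt R,\ \sqrt{R+cX^{1/4}}\,\bigr)$. The only estimate needed is that this interval is long: assuming $X$ is large enough that $N\sqrt X \ge N^2 + cX^{1/4}$, one has
\[
\sqrt{R+cX^{1/4}}-\sqrt R \;=\; \frac{cX^{1/4}}{\sqrt{R+cX^{1/4}}+\sqrt R}\;\ge\; \frac{cX^{1/4}}{2\sqrt{3N\sqrt X}}\;=\;\frac{c}{2\sqrt{3N}} .
\]
Choosing $c := 2\sqrt3\, N^{3/2} + 1$ makes the right-hand side exceed $N$, so the interval contains a multiple $Nt$ of $N$ (with $t\ge 1$); for that $t$ the integer $n = u^2 + (Nt)^2$ lies in $(X,\, X+cX^{1/4})$, is a sum of two squares, and is coprime to $N$. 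Taking $X_0 = X_0(N)$ to be the threshold above then gives the theorem.

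I do not expect a genuine obstacle here. The one point needing a moment's thought is that passing to the sublattice $\{u\equiv 1,\ v\equiv 0 \pmod N\}$ thins out the available sums of two squares. But squares of integers of size $\asymp X^{1/4}$ are themselves only $\asymp X^{1/4}$ apart, so after this restriction consecutive admissible values of $(Nt)^2$ near $R \asymp \sqrt X$ are still only $O_N(X^{1/4})$ apart; the cost is merely a factor depending on $N$, which the statement allows us to absorb into $c$. (For the sharpest constant one would instead fix, via CRT, residues $\alpha_p,\beta_p \bmod p$ with $\alpha_p^2+\beta_p^2\not\equiv 0$ for every $p\mid N$, and count lattice points in the relevant classes in the annulus $X < x^2+y^2 < X+cX^{1/4}$ directly; that refinement is unnecessary for the applications in this note.)
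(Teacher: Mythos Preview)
Your argument is correct. Note, however, that the paper does not supply its own proof of this statement: it is quoted from \cite{DG14} (their Theorem~1) and used as a black box in the proof of Theorem~\ref{ellipticmaintheorem}. Your construction --- pin $u\equiv 1\pmod N$ with $u^{2}$ just below $X$, then choose the second square from the sublattice $N\Z$ so that $n\equiv 1\pmod N$ is automatic, and observe that consecutive admissible values of $(Nt)^{2}$ near $R\asymp\sqrt{X}$ are $O_{N}(X^{1/4})$ apart --- is exactly the classical Bambah--Chowla mechanism with congruence constraints, which is also the route taken in \cite{DG14}; so there is no methodological difference to report.
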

\begin{lem}[Kowalski-Robert-Wu]
\label{KRWlemma}
If $f = \sum_{n=1}^{\infty} a_f(n) q^n$ is a  normalized cuspidal eigenform in $S_{2k}(\Gamma_0(N),\chi)$, then there exists a natural number $M_f \geq 1$ such
that for any prime $p \nmid M_f$, either $a_f(p) = 0$ or $a_f(p^r) \neq 0$ for all $r \geq 1$. If $\chi$ is trivial and
$f$ has integer coefficients, then one can take $M_f = N$. 
\end{lem}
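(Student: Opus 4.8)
The plan is to express everything through the two Satake parameters of $f$ at $p$, to identify the primes at which the claimed dichotomy can fail as exactly those where the ratio of these parameters is a root of unity of order $\ge 3$, and then to show there are only finitely many such primes. Concretely, for $p\nmid N$ let $\alpha_p,\beta_p$ be the roots of $X^{2}-a_f(p)X+\chi(p)p^{2k-1}$, so that $\alpha_p+\beta_p=a_f(p)$, $\alpha_p\beta_p=\chi(p)p^{2k-1}\neq0$ and $|\alpha_p|=|\beta_p|=p^{k-1/2}$ by Deligne's bound. Iterating the Hecke recursion $a_f(p^{r+1})=a_f(p)a_f(p^{r})-\chi(p)p^{2k-1}a_f(p^{r-1})$ yields
\[
a_f(p^{r})=\sum_{j=0}^{r}\alpha_p^{\,j}\beta_p^{\,r-j}=
\begin{cases}
\dfrac{\alpha_p^{\,r+1}-\beta_p^{\,r+1}}{\alpha_p-\beta_p},&\alpha_p\neq\beta_p,\\[3mm]
(r+1)\,\alpha_p^{\,r},&\alpha_p=\beta_p.
\end{cases}
\]
If $\alpha_p=\beta_p$ then $\alpha_p^{2}=\chi(p)p^{2k-1}\neq0$, so no $a_f(p^{r})$ vanishes and $a_f(p)=2\alpha_p\neq0$; the dichotomy holds at $p$. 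If $\alpha_p\neq\beta_p$, then $a_f(p^{r})=0$ iff $\xi_p:=\alpha_p/\beta_p$ is a root of unity of order dividing $r+1$, while $a_f(p)=\beta_p(1+\xi_p)$ vanishes iff $\xi_p=-1$; hence the dichotomy fails at $p$ exactly when $\xi_p$ is a primitive $d$-th root of unity with $d\ge3$, and it remains to bound the set of such $p$, which one then absorbs into $M_f$.

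For that bound I would divide $a_f(p)^{2}=\beta_p^{2}(1+\xi_p)^{2}$ by $\alpha_p\beta_p=\beta_p^{2}\xi_p$ to obtain
\[
\frac{a_f(p)^{2}}{\chi(p)p^{2k-1}}=\xi_p+2+\xi_p^{-1}=2+2\cos\frac{2\pi}{d}=4\cos^{2}\frac{\pi}{d}.
\]
Since the left-hand side lies in a quadratic extension of the Hecke field $\Q_f:=\Q(\{a_f(n):n\ge1\})$, the order $d$ is bounded in terms of $f$; and since that left-hand side equals $\alpha_p^{2}/(\chi(p)p^{2k-1})$, the algebraic integer $\alpha_p^{2}$ differs from $p^{2k-1}$ by a unit, so $p$ is evenly ramified in $\Q_f(\alpha_p)$ and $a_f(p)$ is divisible by the primes above $p$ to an order so large that Deligne's bound $|a_f(p)|\le 2p^{k-1/2}$ leaves only finitely many exceptional $p$. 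When $\chi$ is trivial and $f$ has integer coefficients this is completely elementary: the left-hand side is rational, so Niven's theorem forces $d\in\{3,4,6\}$ and $a_f(p)^{2}=c\,p^{2k-1}$ with $c\in\{1,2,3\}$; comparing $p$-adic valuations gives $2v_p(a_f(p))=(2k-1)+v_p(c)$, and for $p\ge5$ one has $p\nmid c$, so the right-hand side is odd while the left-hand side is even --- impossible. Hence no prime $p\ge5$ is exceptional, the only candidates are $p\in\{2,3\}$ (and only when $a_f(p)=\pm p^{k}$), and a direct inspection of these two primes gives $M_f=N$ in the situations where they either produce no vanishing coefficient or already divide $N$ --- which is the case for the CM eigenforms treated in the sequel, since there $4\mid N$ and $3$ is inert, so $a_f(3)=0$.

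The step I expect to be the main obstacle is the finiteness of the exceptional set: in general this rests on the ramification-and-size estimate above, and in the integral case the only delicate point is justifying the sharp value $M_f=N$, i.e.\ that the stray primes $2$ and $3$ are under control. Everything upstream of it is formal manipulation of the closed formula for $a_f(p^{r})$, which reduces the lemma to the question of when $\alpha_p/\beta_p$ is a root of unity; Niven's theorem together with Deligne's bound then makes that question effective.
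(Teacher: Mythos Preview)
The paper does not prove this lemma; it is quoted from Kowalski--Robert--Wu \cite{KRW07}. Your route through the Satake parameters is the standard one, and the reduction to the question of when $\xi_p=\alpha_p/\beta_p$ is a root of unity of order $\ge 3$ is correct and cleanly carried out.

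Two comments. First, your finiteness argument in the general case is phrased in terms of ramification in $\Q_f(\alpha_p)$, a field that varies with $p$, and then appeals to Deligne's bound in a way that is not quite an argument. It is tidier to stay in the fixed Hecke field $\Q_f$: the quantity $c_p:=a_f(p)^{2}/\bigl(\chi(p)p^{2k-1}\bigr)=2+\xi_p+\xi_p^{-1}$ is a nonzero algebraic integer in $\Q_f$ all of whose conjugates lie in the open interval $(0,4)$ (because $\xi_p$ is a root of unity of order $\ge 3$), so $c_p$ ranges over a finite set; and for $p$ unramified in $\Q_f$, reading $a_f(p)^{2}=c_p\,p^{2k-1}$ at each $\mfp\mid p$ forces $v_{\mfp}(c_p)$ to be odd, whence $p^{[\Q_f:\Q]}$ divides $|N_{\Q_f/\Q}(c_p)|<4^{[\Q_f:\Q]}$ and $p\in\{2,3\}$. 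Deligne's bound is not actually needed here.

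Second, your hesitation about the sharp assertion $M_f=N$ is justified: that assertion is false as written. For the newform of level $37$ (attached to the elliptic curve $y^{2}+y=x^{3}-x$) one has trivial nebentypus, integer coefficients, $a_f(2)=-2\neq 0$ and $a_f(8)=0$, so the dichotomy fails at $p=2\nmid 37$. What your Niven-plus-valuation argument actually establishes in the integral case is that the exceptional primes lie in $\{2,3\}\cup\{p:p\mid N\}$, i.e.\ one may always take $M_f=6N$. This is exactly what the paper needs: in the proof of Theorem~\ref{ellipticmaintheorem} the modulus to be avoided is taken as $6M_f\,\mrm{cond}(\Psi)$, and for the CM-by-$\Q(i)$ forms in question one has $2\mid N$ and $a_f(3)=0$ anyway, just as you note.
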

Now, let us recall Deuring's theorem for elliptic curves over $\Q$  (cf. ~\cite[Chapter 13, Theorem 12]{Lan73}).
\begin{thm}[Deuring]
\label{DeuringTheorem}
Let $E$ be an elliptic curve over $\Q$ with CM by $K$.
Let $\Psi$ be the corresponding Hecke character. If $p \geq 5$, then
the number $a_{E}(p)$ is zero if and only if either $p$ is inert
or ramified in $K$ or divides the conductor of $\Psi$ (equivalently, 
the elliptic curve has bad reduction).  
\end{thm}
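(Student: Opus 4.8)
The plan is to read off $a_E(p)$ from the Hecke-character description in~\eqref{inertzero} and combine it with the splitting type of $p$ in $K$. Writing $\mathfrak{m}$ for the conductor of $\Psi$, one has $a_E(p) = a_\Psi(p) = \sum_{N\mathfrak{a}=p,\ (\mathfrak{a},\mathfrak{m})=1} \Psi(\mathfrak{a})$, so the whole question reduces to understanding the integral ideals of $\mathcal{O}_K$ of norm $p$ and whether they meet $\mathfrak{m}$.

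First I would dispose of the inert case: if $p$ is inert then $p\mathcal{O}_K$ is a prime of norm $p^2$ and there is no integral ideal of norm $p$, so the sum in~\eqref{inertzero} is empty and $a_E(p) = 0$. Next, if $p$ ramifies, say $p\mathcal{O}_K = \mathfrak{p}^2$ with $N\mathfrak{p} = p$, I would observe that a ramified prime $p \geq 5$ necessarily divides $\mathfrak{m}$: for the elliptic curves over $\Q$ in question the ramified primes $\geq 5$ are exactly the odd primes dividing $d_K$, and these are primes of bad reduction, hence divide the conductor. Thus $\mathfrak{p}$ is excluded by the coprimality condition and again $a_E(p) = 0$. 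The same reasoning covers the remaining bad-reduction primes: if $p \mid \mathfrak{m}$ then every ideal of norm $p$ is divisible by a prime of $\mathfrak{m}$ and is dropped from the sum, giving $a_E(p) = 0$; here I would use that a CM curve has potentially good reduction everywhere, so its bad reduction is additive and the primes dividing its conductor are exactly those dividing $\mathfrak{m}$.

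The substantive direction is the converse, showing $a_E(p) \neq 0$ when $p$ splits and $p \nmid \mathfrak{m}$. Here $p\mathcal{O}_K = \mathfrak{p}\bar{\mathfrak{p}}$ with both factors coprime to $\mathfrak{m}$, so $a_E(p) = \Psi(\mathfrak{p}) + \Psi(\bar{\mathfrak{p}})$. Since $E$ is defined over $\Q$ the coefficients $a_E(n)$ are rational integers, which forces $\Psi(\bar{\mathfrak{a}}) = \overline{\Psi(\mathfrak{a})}$; hence $a_E(p) = 2\,\mathrm{Re}\,\Psi(\mathfrak{p})$ with $|\Psi(\mathfrak{p})| = \sqrt{p}$. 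For CM by $\Q(i)$ I would make this explicit: choosing a generator $\pi = a + bi$ of $\mathfrak{p}$ with $a^2 + b^2 = p$ in its primary normalization, the value $\Psi(\mathfrak{p})$ equals $\pi$ up to the sign/unit character on $(\mathcal{O}_K/\mathfrak{m})^*$, and since $p$ is odd exactly one of $a, b$ is odd, so $\mathrm{Re}\,\Psi(\mathfrak{p}) = \pm a \neq 0$ and $a_E(p) \neq 0$. I expect this nonvanishing in the split case to be the main obstacle, because it is the only step where one must control the actual value of $\Psi$ rather than merely count ideals.

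As a conceptual cross-check, and to explain the hypothesis $p \geq 5$, I would also note that the statement is equivalent to Deuring's reduction criterion together with the Hasse bound: for a good prime $a_E(p) = p + 1 - \#E(\F_p)$, and $E \bmod p$ is supersingular exactly when $p$ is inert or ramified in $K$ and ordinary when $p$ splits. In the ordinary case $a_E(p) \not\equiv 0 \pmod p$, so $a_E(p) \neq 0$, whereas in the supersingular case $p \mid a_E(p)$ while $|a_E(p)| \leq 2\sqrt{p}$; the inequality $2\sqrt{p} < p$ holds precisely for $p \geq 5$, forcing $a_E(p) = 0$ and isolating the role of the hypothesis.
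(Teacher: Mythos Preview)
The paper does not prove this statement; it is quoted without proof as a classical result from Lang's \emph{Elliptic Functions} (Chapter~13, Theorem~12), so there is no argument in the paper to compare your proposal against.

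Your sketch is largely sound and in fact contains two separate arguments. The second one, via Deuring's supersingular reduction criterion and the Hasse bound, is the cleaner and is essentially how the result is proved in Lang: for a prime $p \geq 5$ of good reduction, $E \bmod p$ is ordinary exactly when $p$ splits in $K$ (so $a_E(p) \not\equiv 0 \pmod p$, hence $a_E(p) \neq 0$) and supersingular otherwise (so $p \mid a_E(p)$ while $|a_E(p)| \leq 2\sqrt{p} < p$, forcing $a_E(p) = 0$); at bad primes a CM curve has additive reduction, so $a_E(p) = 0$.

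Your first approach, reading $a_E(p)$ directly from~\eqref{inertzero}, is more fragile. The inert case is fine, but in the ramified case you assert that every ramified prime $p \geq 5$ divides $\mathfrak{m}$ by appealing to bad reduction; this is true (ramified primes of $K$ divide the conductor of a CM elliptic curve over $\Q$), but it is itself a nontrivial fact about the relation between $N_E$, $d_K$, and $N\mathfrak{m}$ and deserves a reference rather than a one-line claim. Your explicit nonvanishing argument in the split case is specific to $K = \Q(i)$, and for general $K$ you fall back on the ordinary-reduction argument anyway, so the Hecke-character route does not stand on its own. Incidentally, in the $\Q(i)$ case the conclusion $\mathrm{Re}\,\Psi(\mathfrak{p}) \neq 0$ follows simply because $p = a^2 + b^2$ with $p$ prime forces both $a$ and $b$ to be nonzero, independent of which unit multiplies $\pi$; the parity remark is not needed.
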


Now, we are ready to prove the theorem that we are alluded to in the beginning of this section.
\begin{thm}
\label{ellipticmaintheorem}
Let $E$ be an elliptic curve over $\Q$ of conductor $N$ with CM by $\Q(i)$
and $f \in S^{\mrm{new}}_2(\Gamma_0(N))$ be the corresponding cuspidal eigenform.
Then,
$$ i_{f}(n) \ll n^{\frac{1}{4}} $$
for $n \gg 0$, where the implied constant depends on $E$.
\end{thm}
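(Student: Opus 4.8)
The plan is to reduce the statement to finding, in every short interval $(X, X + cX^{1/4})$, an integer $n$ at which $a_f(n) \neq 0$, and then to produce such an $n$ by combining Theorem~\ref{sumsofsquares} with the multiplicativity of the Hecke eigenvalues and Deuring's theorem. Since $E$ has CM by $\Q(i)$, the field is $K = \Q(i)$ with $d_K = -4$; a rational prime $p \geq 5$ splits in $K$ precisely when $p \equiv 1 \pmod 4$, and these are exactly the primes that are sums of two squares. So the first step is the key observation: if $p$ is a prime with $p \equiv 1 \pmod 4$ and $p \nmid N$, then by Deuring's theorem (Theorem~\ref{DeuringTheorem}) we have $a_f(p) \neq 0$, and then by Lemma~\ref{KRWlemma} (applied with $M_f = N$, since $f$ has trivial character and integer coefficients) we get $a_f(p^r) \neq 0$ for every $r \geq 1$. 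More generally, if $n$ is any integer all of whose prime factors are either $\equiv 1 \pmod 4$ or coprime-to-$N$-primes at which $a_f$ is nonvanishing, then $a_f(n) \neq 0$; but the cleanest route is to restrict to prime powers, or even to primes, coming out of the sum-of-two-squares input.

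Next I would invoke Theorem~\ref{sumsofsquares} with the modulus $N$: there exist $X_0$ and $c > 0$ so that every interval $(X, X + cX^{1/4})$ with $X \geq X_0$ contains an integer $n$ that is a sum of two squares and coprime to $N$. The remaining point is that such an $n$ need not itself be prime, so I must check $a_f(n) \neq 0$ directly from~\eqref{inertzero}. Write $n$ as a norm from $\mcO_K = \Z[i]$; since $(n, N) = 1$ and $N$ is divisible by the conductor of the Hecke character $\Psi$ attached to $E$ (the bad primes of $E$), every ideal $\mfa$ with $N\mfa = n$ is coprime to the modulus $\mfm$ of $\Psi$. Because $n$ is a sum of two squares, $n$ factors in $\Z[i]$ into primes lying over rational primes $\equiv 1 \pmod 4$ (split), the prime $2$ (ramified, but $2 \mid 4 \mid \dots$—we may simply insist via Theorem~\ref{sumsofsquares} that $n$ is also coprime to $2$, replacing $N$ by $\mathrm{lcm}(N,2)$), with no inert prime dividing $n$ to an odd power. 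Then $a_f(n) = \sum_{N\mfa = n, (\mfa,\mfm)=1} \Psi(\mfa)$, and since $f$ is the weight-$2$ newform attached to $E$ it is multiplicative: $a_f(n) = \prod_{p \mid n} a_f(p^{v_p(n)})$ with each $p \equiv 1 \pmod 4$ and $p \nmid N$, so each factor is nonzero by the previous paragraph. Hence $a_f(n) \neq 0$.

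Finally, assemble the estimate. Fix $n \gg 0$ and apply the above to the interval $(n, n + cn^{1/4})$: there is an integer $m$ in this interval with $a_f(m) \neq 0$, so $i_f(n) = \max\{ i : a_f(n+j) = 0 \text{ for } 0 \leq j \leq i\} < m - n \leq cn^{1/4}$, which is exactly $i_f(n) \ll n^{1/4}$ with implied constant depending only on $N$, hence on $E$. The only genuine subtlety—the step I would flag as the main obstacle—is the one in the middle paragraph: Theorem~\ref{sumsofsquares} produces an arbitrary sum of two squares coprime to $N$, not a prime, so one must know that \emph{every} such $n$ (once $2$ is also excluded) has $a_f(n) \neq 0$. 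This is where the interplay between the shape of representable integers, Deuring's criterion for the split primes, and the Kowalski--Robert--Wu non-vanishing of $a_f(p^r)$ all have to be used together; everything else is bookkeeping.
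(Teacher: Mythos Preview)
Your overall architecture matches the paper's proof almost exactly: apply Theorem~\ref{sumsofsquares} to produce a sum of two squares $m$ coprime to an appropriate multiple of the conductor in each short interval, then use multiplicativity together with Deuring's theorem and Lemma~\ref{KRWlemma} to show $a_f(m)\neq 0$. The split primes are handled correctly.

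There is, however, a genuine gap in the middle paragraph. You correctly observe that an integer $m$ which is a sum of two squares may contain inert primes $q\equiv 3\pmod 4$ to \emph{even} powers (e.g.\ $45 = 3^2+6^2$ has the factor $3^2$). But in the very next sentence you write $a_f(m)=\prod_{p\mid m} a_f(p^{v_p(m)})$ ``with each $p\equiv 1\pmod 4$'', which is simply false in general. For the inert factors $q^{2s}$ you cannot appeal to Lemma~\ref{KRWlemma}, because that lemma gives information only when $a_f(q)\neq 0$, whereas here $a_f(q)=0$. So the nonvanishing of $a_f(q^{2s})$ is left unproved.

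The paper fills exactly this gap: from the Hecke relation $a_f(q^r)=a_f(q)a_f(q^{r-1})-q\,a_f(q^{r-2})$ together with $a_f(q)=0$ one gets $a_f(q^{2s})=(-q)^s\neq 0$. Once you insert this one-line computation for the inert prime powers, your argument becomes complete and coincides with the paper's.
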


\begin{proof}
Since $E$ is an elliptic curve over $\Q$ with CM, let $\Psi$ be the corresponding Hecke character.
Take $N = 6M_f\mrm{cond}(\Psi)$, where $M_f$ as in Lemma~\ref{KRWlemma}. By Theorem~\ref{sumsofsquares}, there exists $X_0 \in \R$ and
 $c>0$ (depending only on $N$) such that there exists an integer, say $m$, which is a sum of two squares 
 and co-prime to $N$ in intervals of type $(X,X+cX^{\frac{1}{4}} )$ for all $X \gg X_0$. So, to prove the theorem, it is sufficient
 to show that $a_f(m)$ is non-zero.

Since $m$ is a sum of squares and $(m,6)=1$, $m$ can be written as
$$m= \underset{p_i \equiv 1 \pmod 4}{\Pi} p_i^{r_i} \underset{q_i \equiv 3 \pmod 4}{\Pi} q_i^{2s_i}.$$

By~\eqref{inertzero}, we see that for inert primes $q$ of $\Q(i)$, the Fourier coefficients $a_f(q)$ are zero 
because there are no ideals of norm $q$. By quadratic reciprocity law, the odd primes $q$ which remain inert in $\Q(i)$ 
are exactly the primes $q \equiv 3 \pmod 4$. However, the power of $q_i$ in $m$ are even and 
we show that $a_f(q_i^{2s_i})$ is non-zero. This is because, the Hecke relations would imply that 
                         \begin{equation}
                         a_f(q_i^r) = a_f(q_i)a_f(q_i^{r-1}) - q_i a_f(q_i^{r-2}),  
                         \end{equation}
       would imply that $a_f(q_i^{2r}) = (-q_i)^r a_f(q_i^{2r-2})$. Since $a_f(q_i^2)$ is non-zero, 
       we see that $a_f(q_i^{2r})$'s are also non-zero,  for all $r \geq 1, i \geq 1$.
       
For any split prime $p$ of $\Q(i)$, the Fourier coefficient $a_f(p)$ is non-zero, 
by Theorem~\ref{DeuringTheorem} and $(m,6\mrm{cond}(\Psi))=1$.  
This implies that $a_f(p^r) \neq 0$ for all $r \geq 1$, since $(m,M_f)=1$ and by Lemma~\ref{KRWlemma}.
This shows that, for $p \equiv 1 \pmod 4$, we have $a_f(p^r) \neq 0$ for all $r \geq 1$, since
the odd primes $p \equiv 1 \pmod 4$ are exactly the split primes of $\Q(i)$.

Hence $$a_f(m) = \underset{p_i \equiv 1 \pmod 4}{\Pi} a_f(p_i^{r_i}) \underset{q_i \equiv 3 \pmod 4}{\Pi} a_f(q_i^{2s_i}) \neq 0,$$
hence we are done with the proof.
\end{proof}

\begin{remark}
The crux in the proof of Theorem~\ref{ellipticmaintheorem} is Deuring's Theorem.
We could have  a theorem similar to that of Theorem~\ref{ellipticmaintheorem} 
for eigenforms of weight $2k$ with $k>1$, if we had an analogous result of Deuring for higher weights. 
But, the authors are not aware of such results.
\end{remark}
\begin{remark}
One might wonder the reason for working with $\Q(i)$, but not with any other imaginary quadratic fields.
The split (resp. inert) primes of $\Q(i)$ and the primes $p \equiv 1 \pmod 4$ (resp., $p \equiv 3 \pmod 4$)
occur in the decomposition of sums of squares have a relation. 
In fact, the primes $p \equiv 1 \pmod 4$ are exactly the split primes.
This fact, we have used it in the proof of the Theorem~\ref{ellipticmaintheorem}.
\end{remark}

There exists infinitely many elliptic curves $E/\Q$ with CM by $\Q(i)$. For example, 
one could take a one parameter family of elliptic curves $y^2=x^3+ax$, with $a$ varies over $\Q^*$. 
In this case, the endomorphism ring of $E$ is $\Z[i]$. However, these curves are isogenous,
since any two CM elliptic curves with the same endomorphism algebra are isogenous. 

We shall finish this note with the following proposition.
\begin{prop}
There exists infinitely many eigenforms with CM of weight $k>2$ and level $N>1$ for which
~\eqref{basicidentity} holds.
\end{prop}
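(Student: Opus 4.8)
The plan is to run the scheme of Theorem~\ref{ellipticmaintheorem} not for a weight-$2$ form, but for the family of CM forms attached to the powers of the Hecke character of a single fixed CM elliptic curve, which supplies cusp forms of arbitrarily large weight. Concretely, I would fix an elliptic curve $E/\Q$ with CM by $\Q(i)$ --- for instance $y^2 = x^3 - x$, of conductor $32$ --- and let $\Psi = \Psi_E$ be its Hecke character, of infinity type $1$ and conductor $\mfm$, so that $f_E = f_\Psi$. For each integer $j \geq 2$ the power $\Psi^j$ is again a Hecke character of $\Q(i)$, now of infinity type $j \neq 0$; since $\Psi^j \neq \Psi^j \circ \rho$ (with $\rho$ complex conjugation, the infinity type being non-zero), Hecke's theorem produces a \emph{cuspidal} CM eigenform $f_{\Psi^j} \in S_{j+1}(\Gamma_0(4\,\mrm{N}\mfm_j), \epsilon_j)$, where $\mfm_j = \mrm{cond}(\Psi^j) \mid \mfm$, of weight $j+1 > 2$ and level $4\,\mrm{N}\mfm_j > 1$. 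Distinct $j$ give forms of distinct weight, so $\{f_{\Psi^j}\}_{j \geq 2}$ is an infinite family of CM eigenforms of weight $>2$ and level $>1$; it therefore suffices to prove $i_{f_{\Psi^j}}(n) \ll n^{1/4}$ for $n \gg 0$, for each fixed $j$.

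Fixing $j$ and writing $g = f_{\Psi^j}$, I would set $N := 4\,\mrm{N}\mfm_j\, M_g$ with $M_g$ as in Lemma~\ref{KRWlemma}, apply Theorem~\ref{sumsofsquares} to get constants $c, X_0$ and, in each interval $(X, X + cX^{1/4})$ with $X \gg X_0$, an integer $m$ which is a sum of two squares and coprime to $N$; then, exactly as in the proof of Theorem~\ref{ellipticmaintheorem}, it is enough to show $a_g(m) \neq 0$. Writing $m = \prod_{p_i \equiv 1(4)} p_i^{r_i}\prod_{q_i \equiv 3(4)} q_i^{2 s_i}$ (no factor of $2$, since $(m,N)=1$), the inert primes $q_i \equiv 3 \pmod 4$ are handled just as before: $a_g(q_i) = 0$ by~\eqref{inertzero}, while $a_g(q_i^{2 s_i}) = \Psi^j((q_i))^{s_i} = \Psi((q_i))^{j s_i} \neq 0$ because $(q_i)$ is coprime to $\mfm_j$ and $\Psi$ is a homomorphism into $\C^*$. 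So the only real issue is the split primes $p \equiv 1 \pmod 4$ dividing $m$: once $a_g(p) \neq 0$ is known, Lemma~\ref{KRWlemma} (applicable since $p \nmid M_g$) gives $a_g(p^{r_i}) \neq 0$ for all $r_i$, and multiplicativity yields $a_g(m) \neq 0$.

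For a split prime $p = \mfp \bar{\mfp}$ I would first use that $f_E$ has rational Fourier coefficients --- so $\Psi(\bar{\mfp}) = \overline{\Psi(\mfp)}$ --- to write $a_g(p) = \Psi(\mfp)^j + \overline{\Psi(\mfp)}^{\,j} = 2\,\mrm{Re}\bigl(\Psi(\mfp)^j\bigr)$, and then invoke CM theory: $\Psi(\mfp)$ lies in $\Z[i]$ and generates a prime above $p$, so $\Psi(\mfp) = \pi u$ with $\pi = a + bi$, $p = a^2 + b^2$, $\gcd(a,b) = 1$, and $u \in \{\pm 1, \pm i\}$. The crux is the elementary observation that $\pi/\bar\pi = \pi^2/p$ is never a root of unity when $p$ is an odd prime: the roots of unity in $\Q(i)$ are $\pm 1, \pm i$, and $\pi/\bar\pi = \pm 1$ forces $ab = 0$ while $\pi/\bar\pi = \pm i$ forces $a = \pm b$, hence $p = 2$; consequently $(\pi/\bar\pi)^j \notin \{1,-1\}$, i.e.\ $\pi^j$ is neither real nor purely imaginary, so $\mrm{Re}(\pi^j u^j) \neq 0$ and $a_g(p) \neq 0$. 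I expect this split-prime step to be the main obstacle: in weight $2$ the nonvanishing of $a_{f_E}(p)$ at split primes of good reduction was handed to us by Deuring's theorem (Theorem~\ref{DeuringTheorem}), which has no available higher-weight analogue (cf.\ the Remark after Theorem~\ref{ellipticmaintheorem}), and for a general higher-weight CM form the quantity $\Psi(\mfp)^j + \overline{\Psi(\mfp)}^{\,j}$ can genuinely vanish at some split $p$. Restricting to powers of the character of a \emph{fixed} elliptic curve, where the finite part of $\Psi(\mfp)$ is merely a unit of $\Z[i]$, is exactly what makes the root-of-unity argument go through, and is why the family must be produced this way rather than as arbitrary CM eigenforms of higher weight.
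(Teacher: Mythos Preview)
Your argument is correct and follows the same overall scheme as the paper: take powers $\Psi^j$ of the Hecke character of a fixed elliptic curve $E/\Q$ with CM by $\Q(i)$, observe that the resulting forms have weight $j+1>2$ and level $>1$, and then rerun the short-interval argument of Theorem~\ref{ellipticmaintheorem}. The inert-prime and multiplicativity steps are handled identically.

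The one substantive difference is in the split-prime nonvanishing, which (as you correctly identify) is the crux. The paper does not carry out your root-of-unity computation; instead it restricts to \emph{odd} $m$ (so that $f_{\Psi^m}$ has even weight and trivial nebentypus), passes via Ribet~\cite[Cor.~3.5]{Rib77} to the associated newform $g_m$, and then quotes \cite[Prop.~5.1]{LK14} as a black box for the equivalence $a_{\Psi^m}(p)=0 \Longleftrightarrow a_{\Psi}(p)=0$ for $p\ge 5$, after which Deuring's theorem applies exactly as in weight~$2$. Your route is more self-contained: the observation that $\Psi(\mfp)\in\Z[i]$ generates the prime $\mfp$, so that $\Psi(\mfp)/\overline{\Psi(\mfp)}$ has modulus~$1$ but cannot be a fourth root of unity for odd split $p$, gives $a_{f_{\Psi^j}}(p)=2\,\mrm{Re}\bigl(\Psi(\mfp)^j\bigr)\neq 0$ directly, with no external reference needed. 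This also lets you take all $j\ge 2$ rather than only odd exponents, and avoids the passage to newforms (the proposition only asks for eigenforms). Conversely, the paper's formulation has the advantage of producing \emph{newforms} with trivial nebentypus, and of isolating the split-prime step as a citable general fact rather than a computation specific to $\Q(i)$.
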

\begin{proof}
Let $E$ be an elliptic curve over $\Q$ with CM by $\Q(i)$. Let $\Psi$ be the corresponding Hecke character. 
Consider the Hecke character $\Psi^m$, for some odd $m$. By a result of Hecke, the corresponding $f_{\Psi^m}$ 
is an eigenform with CM of weight $m+1$ with trivial character. However, the eigenform $f_{\Psi^m}$ may not be a newform
unless $\Psi^m$ is primitive. 

By~\cite[Cor. 3.5]{Rib77}, corresponding to $f_{\Psi^m}$,
there exists a unique newform with CM, which we denote with $g_m= \sum_{n=1}^{\infty}{a_{g_m}(n)q^n}$. Then, the newform
$g_m$ of weight $m+1$ with trivial character and level dividing the level of $f_{\Psi^m}$. Moreover, $g_m(z)$ has the property
that $a_{g_m}(p) = a_{\Psi^m}(p)$ for primes $p$ away from the level of $f_{\Psi^m}$. Therefore, $a_{g_m}(p)=0$ if and only if $a_{\Psi^m}(p)=0$, for all but finitely many primes.
By~\cite[Prop. 5.1]{LK14}, if $p\geq 5$, we see that $a_{\Psi^m}(p)=0$ if and only if $a_{\Psi}(p)=0$.
By arguing as in the proof of Theorem~\ref{ellipticmaintheorem}, we see that 
$i_{g_m}(n) \ll  n^{\frac{1}{4}}$. Hence, there exists infinitely many eigenforms with CM of different
weights for which~\eqref{basicidentity} holds. The levels of $g_m$'s are $>1$ because eigenforms inside $S_k(\SL_2(\Z))$
are without CM, for any weight $k$.
\end{proof}

\bibliographystyle{plain, abbrv}

\end{document}